\newtheorem{theorem}{Theorem}[section]
\newenvironment{proof}[1][Proof]{\textbf{#1.} }{\ \hfill \rule{0.5em}{0.5em}}
\def\RR{\mathbb{R}}
\def\EE{\mathbb{E}}
\def\bft{{\bf t}}
\def\de{{\delta}}
\def\la{{\lambda}}
\def\si{{\sigma}}
\def\al{{\alpha}}
\def\Ga{{\Gamma}}
\def\de{{\delta}}
\def\si{{\sigma}}
\def\la{{\lambda}}
\def\vare{{\varepsilon}}
\def \eref#1{\hbox{(\ref{#1})}}
\def\EE{\mathbb{ E}\ }
\def \eref#1{\hbox{(\ref{#1})}}
\def\si{{\sigma}}
\def\al{{\alpha}}
\def\bfY{{\bf Y}}
\def\bft{{\bf t}}
\def\bfB{{\bf  B}}
\def\bfN{{\bf  N}}
\def\var{{\hbox{Var}}}
\begin{document}

{\bf EXACT MAXIMUM LIKELIHOOD ESTIMATOR FOR DRIFT \\ FRACTIONAL
BROWNIAN MOTION AT DISCRETE OBSERVATION} \vspace{0.5cm}

\centerline{BY YAOZHONG
HU\renewcommand{\thefootnote}{\fnsymbol{footnote}}\setcounter{footnote}{0}
\footnote{Corresponding author:
hu@math.ku.edu}\renewcommand{\thefootnote}{\arabic{footnote}}\setcounter{footnote}{0}
\footnote{Supported by NSF Grants DMS-0504783}, WEILIN XIAO$^2$ AND
WEIGUO ZHANG\footnote{Supported by National Natural Science Funds
for Distinguished Young Scholar 70825005 \newline  {\sl AMS 2000
subject classifications:} Primary 62G05;  secondary 60H07
\newline {\sl keywords and phrases:} Maximum likelihood estimation,
fractional Brownian motions, discrete observation, strong
consistence, central limit theorem, Malliavin calculus. }}
\vspace{2mm}
\centerline{\sl University of Kansas and South China
University of Technology}
\begin{abstract}
This paper deals with the problems of consistence and strong
consistence of the maximum likelihood estimators of the mean and
variance of the drift fractional Brownian motions   observed at
discrete time instants.  A central limit theorem for these
estimators is also obtained by using the Malliavin calculus.
\end{abstract}

\vspace{5mm} \setcounter{section}{1} \normalsize {\bfseries 1.
Introduction.}
 Long memory processes have been widely applied to various fields,
such as finance, hydrology, network traffic analysis and so on.
Fractional Brownian motions are one special class of long memory
processes when the Hurst parameter $H>1/2$.  The stochastic calculus
for these processes has now been well-established (see \cite{bhoz}).
When a long memory model is used to describe some phenomena, it is
important to identify the parameters in the model. In this paper, we
shall consider the following simple model
\begin{equation}
 Y_t = \mu  t +
\sigma B_t^{H}, \quad t\ge 0\,,\label{e.1.1}
\end{equation}
where $\mu$ and  $\sigma$ are constants to be estimated from
discrete  observations of the process $Y$.  Our method works for
fractional Brownian motions of all parameters. So in this paper we
assume that
 $(B_{t}^{H}, t\ge 0) $ is a fractional Brownian motion of Hurst parameter $H\in
 (0, 1)$. But we do not discuss the case $H=1/2$, the standard
 Brownian motion case since it is known.
 This  means, $(B_{t}^{H}, t\ge 0) $  is a mean $0$ Gaussian process
 with the following covariance structure:
 \[
 \EE\left(B_{t}^{H} B_{s}^{H})\right)=\frac12 \left(t^{2H}+s^{2H}-|t-s|^{2H}\right)\,.
 \]
We assume that the process is observed at discrete time instants
$(t_1, t_2, \cdots, t_N)$. To simplify notation we assume $t_k=kh,
k=1, 2, \cdots, N$ for some fixed length $h>0$. Thus the observation
vector is ${\bf Y}=(Y_{t_{1}},Y_{t_{2}},\cdots,Y_{t_{N}})'$.   We
will obtain the maximum likelihood estimators $\hat \mu_N$ and $\hat
\si_N^2$ of $\mu$ and $\si^2$ respectively and study their
asymptotic behaviors. In particular,  the almost sure convergence
and the  central limit type theorem.

The first reason we chose to study   \eref{e.1.1}  is because it is
simple and we can obtain explicit estimators. The second reason is
that it is also widely applied in various fields. The logarithm of a
widely used geometric fractional Brownian motion, which is popular
in finance,   is of the form \eref{e.1.1}.  This paper  is also
complementary to the work \cite{hu}, where the parameter estimation
problem (with  continuous time observation) for fractional
Ornstein-Uhlenbeck processes is studied.

The parameter estimation problem for long memory processes have been
well-studied (see \cite{beran}, \cite{fox}, \cite{hannan},
\cite{palma}, \cite{privault}).   Although most work requires the
process to be stationary, we may still adapt their idea to analyze
above model \eref{e.1.1}. But we shall use the method of \cite{hu}
which seems to be the simplest  one to us. This method is based on a
result of (\cite{nualart})   and uses the idea of  Malliavin
calculus.

We introduce notation
\begin{equation}
{\bf Y}=\mu {\bf t}+\sigma \bfB_t^{H}\,, \label{e.1.2}
\end{equation}
where and for the rest of the paper ${\bf t}=(h, 2h, \cdots, Nh)'$
and ${\bfB}_t^{H}=(B_{h}^H, \cdots, B_{Nh}^H)' $. The joint
probability density function of ${\bf Y}$ is
\begin{eqnarray*}
h(\bfY) &=&(2 \pi\si^2 )^{-\frac{N}{2}} |\Gamma_H|^{-\frac{1}{2}}
\exp\Big(-\frac{1}{2\si^2 }(\bfY-\mu {\bf t})' \Gamma_H^{-1}
(\bfY-\mu {\bf t})\Big)\,,
\end{eqnarray*}
 where
\[
\Gamma_{H}=\Big[[Cov[B^{H}_{ih},B^{H}_{jh}]\Big]_{i,j=1,2,\cdots,N}=\frac12
h^{2H} \left(i^{2H}+j^{2H}-|i-j|^{2H}\right) _{i,j=1,2,\cdots,N}\,.
\]
The maximum likelihood estimators of $\mu$ and $\sigma^{2}$ from the
observation $\bfY$ are given by
\begin{eqnarray}
        \hat{\mu}
        &=&\frac{{\bf t}'\Gamma_{H}^{-1}\bfY}{{\bf
t}'\Gamma_{H}^{-1} {\bf t}}\,, \label{e.1.3} \\
\hat{\sigma}^{2} &=&\frac{1}{N}\frac{(\bfY'\Gamma^{-1}_{H}\bfY)
({\bf t}'\Gamma^{-1}_{H}{\bf t})- ({\bf t}'\Gamma^{-1}_{H}
\bfY)^{2}}{{\bf t}'\Gamma^{-1}_{H}{\bf t}}\,.\label{e.1.4}
\end{eqnarray}
In Section 2, we shall show that  $ \hat{\mu}$ and
$\hat{\sigma}^{2}$ converge to $\mu$ and $\si^2$ both in mean square
and almost surely. In Section 3, we prove central limit type
theorem. In Section 4, we give some simulation to demonstrate our
estimators
 $ \hat{\mu}$ and $\hat{\sigma}^{2}$.

\setcounter{equation}{0}

\vspace{6mm} \setcounter{section}{2} \normalsize {\bfseries 2.
Consistence.} In this  section we will consider the $L^{2}$
consistency and  the strong consistency of both MLE $\mu$ and
$\sigma^{2}$.

Now, let us first consider the $L^{2}$ consistency of \eref{e.1.3}.

\begin{theorem} The estimator $\hat\mu$ (defined by \eref{e.1.3}) of $\mu$   is unbiased and
it converges in probability to $\mu$ as $N\rightarrow \infty$.
\end{theorem}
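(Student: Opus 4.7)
The plan is to split the argument into an algebraic identity, an unbiasedness calculation, and a variance-vanishing estimate; convergence in probability then follows from $L^2$-convergence via Chebyshev's inequality.

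First I would substitute $\bfY = \mu \bft + \si \bfB_t^H$ into the definition \eref{e.1.3}, which immediately gives the key decomposition
\[
\hat\mu - \mu \;=\; \si\,\frac{\bft'\Gamma_H^{-1}\bfB_t^H}{\bft'\Gamma_H^{-1}\bft}\,.
\]
Since $\bfB_t^H$ is mean-zero Gaussian, taking expectations gives $\EE[\hat\mu]=\mu$, proving unbiasedness. For the variance, the numerator is a linear combination of centered Gaussians with covariance matrix $\Gamma_H$, so a direct computation gives
\[
\var(\hat\mu) \;=\; \si^{2}\,\frac{\bft'\Gamma_H^{-1}\Gamma_H\Gamma_H^{-1}\bft}{(\bft'\Gamma_H^{-1}\bft)^{2}} \;=\; \frac{\si^{2}}{\bft'\Gamma_H^{-1}\bft}\,.
\]

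The core analytic task is therefore to show that the quadratic form $\bft'\Gamma_H^{-1}\bft$ diverges as $N\to\infty$. I would do this by a Rayleigh-quotient lower bound: if $\la_{\max}(\Gamma_H)$ denotes the largest eigenvalue, then
\[
\bft'\Gamma_H^{-1}\bft \;\ge\; \frac{\|\bft\|^{2}}{\la_{\max}(\Gamma_H)}\,.
\]
Now $\|\bft\|^{2}=h^{2}\sum_{i=1}^{N} i^{2}\sim \tfrac{h^{2}}{3}N^{3}$, while
\[
\la_{\max}(\Gamma_H) \;\le\; \tr(\Gamma_H) \;=\; h^{2H}\sum_{i=1}^{N} i^{2H} \;\sim\; \tfrac{h^{2H}}{2H+1}\,N^{2H+1}\,.
\]
Combining these estimates yields $\bft'\Gamma_H^{-1}\bft \gtrsim c\, N^{2-2H}$ for some constant $c=c(H,h)>0$, which tends to infinity for every $H\in(0,1)$. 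Consequently $\var(\hat\mu)\to 0$, so $\hat\mu\to\mu$ in $L^{2}$ and hence in probability.

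The main obstacle — and the only nontrivial ingredient — is controlling the denominator $\bft'\Gamma_H^{-1}\bft$. The trace bound above is crude (especially for $H$ close to $1$, where the growth rate $N^{2-2H}$ becomes slow), but it is enough for the qualitative statement here; a sharper asymptotic rate will be needed later when establishing the central limit theorem in Section 3, so the present argument should be kept intentionally simple.
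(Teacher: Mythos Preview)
Your proof is correct and follows essentially the same route as the paper: the same substitution, the same variance formula $\si^{2}/(\bft'\Gamma_H^{-1}\bft)$, and the same Rayleigh-quotient reduction to bounding $\la_{\max}(\Gamma_H)$ by $O(N^{2H+1})$. The only cosmetic difference is that you bound $\la_{\max}$ by the trace, whereas the paper invokes the Gerschgorin circle theorem to get the same $N^{2H+1}$ order; your trace argument is arguably more elementary and equally sufficient here.
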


\begin{proof}     Substituting  $\bfY$ by $\mu \bft+\si \bfB_t^H$ in \eref{e.1.3}, we have
\begin{equation}
\hat \mu=\mu +\si \frac{\bft '\Ga_H^{-1} \bfB_t^H}{\bft '\Ga_H^{-1}
\bft}\,. \label{e.2.1}
\end{equation}
Thus $\EE\left[\hat \mu\right]=\mu$ and hence $\hat \mu$  is
unbiased.
 On the other hand, we have
\begin{eqnarray*}
\var[\hat{\mu}] &=& \sigma^{2} E \Big[ \frac{{\bf t}'
\Gamma_{H}^{-1} \bfB_t^{H} (\bfB_t^{H})' \Gamma_{H}^{-1} {\bf
t}}{({\bf t}' \Gamma_{H}^{-1} {\bf t} )^{2}}\Big] =\sigma^{2}
\frac{{\bf t}' \Gamma_{H}^{-1} \Gamma_{H} \Gamma_{H}^{-1} {\bf
t}}{({\bf t}' \Gamma_{H}^{-1} {\bf t} )^{2}} = \frac{\sigma^{2}}{
{\bf t}' \Gamma_{H}^{-1} {\bf t} } \,. \notag
\end{eqnarray*}
Denote
\[
M=(m_{ij})_{i,j=1,\ldots,N}\,,\quad {\rm where}\quad
m_{ij}=\frac{1}{2} (i^{2H}+j^{2H}- |i-j|^{2H})\,,
\]
and denote by   $m_{i,j }^{-1}$   the entry of the inverse matrix
$M^{-1}$ of $M$.  Then  we may write
\begin{equation}
\var[\hat{\mu}]=h^{-2H} \frac{\sigma^{2}}{{\bf t}' M^{-1} {\bf t}
}=h^{-2H} h^{-2} \frac{\sigma^{2}}{\sum_{i,j=1}^{N} i j m_{i,j
}^{-1}}= \frac{\sigma^{2}h^{-2H-2}}{\sum_{i,j=1}^{N} i j m_{i,j
}^{-1}} \,. \notag
\end{equation}
We shall use  the following  inequality  (with $x=\bfN=(1, 2,
\cdots, N)$)
\begin{equation}
x'M^{-1}x\geq \frac{\parallel x
\parallel_{2}^{2}}{\lambda_{max}}\,, \notag
\end{equation}
\noindent where $\lambda_{max}$ is the largest eigenvalue of the
matrix $M$. Thus  we  have
\begin{equation}
\var[\hat{\mu}]\leq \sigma^{2} h^{-2H-2}
\frac{\lambda_{max}}{\parallel \bfN
\parallel_{2}^{2}} \notag
\end{equation}
Since $\|\bfN\|^2=1^{2}+2^{2}+\ldots+n^{2}=\frac{n(n+1)(2n+1)}{6}$
we know that $\parallel \bfN \parallel_{2}^{2}\approx N^3$. On  the
other  hand we have  by the Gerschgorin Circle Theorem (see
\cite{golub}, Theorem 8.1.3)
\begin{equation}
\lambda_{max} \leq
\max_{i=1,\ldots, N} \sum_{j=1}^{N} \mid m_{ij} \mid \leq CN^{
2H+1}\,,  \notag
\end{equation}
where $C$ a positive constant whose value may be different in
different occurrences.  Consequently, we have
\begin{equation*}
\var[\hat{\mu}]\leq C \sigma^{2} h^{-2H-2} N^{-3} N^{2H+1} =C
 N^{2H-2}\,.
\end{equation*}
which converges  to zero as  $N\rightarrow\infty$.
 \end{proof}

Next we   study  the estimator $\hat \si^2 $ defined by
\eref{e.1.4}.

\begin{theorem}  We have
\begin{equation}
\EE(\hat\si^2)=\frac{N-1}{N} \si^2\quad{\rm and}\quad \var[{\hat
\sigma^{2}}]\xrightarrow{N\rightarrow\infty}0. \label{e.2.2}
\end{equation}
\end{theorem}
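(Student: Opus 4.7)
The plan is to eliminate the $\mu$-terms from $\hat\sigma^2$ and rewrite it as a quadratic form in $\bfB_t^H$, then compute its mean and variance using standard Gaussian quadratic form identities.

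First I would substitute $\bfY=\mu\bft+\sigma\bfB_t^H$ into \eref{e.1.4}. The cross terms involving $\mu$ cancel between the two pieces of the numerator, and the $\mu^2$ terms cancel as well, leaving
\begin{equation*}
\hat\sigma^2=\frac{\sigma^2}{N}(\bfB_t^H)'Q\bfB_t^H,\qquad Q:=\Gamma_H^{-1}-\frac{\Gamma_H^{-1}\bft\,\bft'\Gamma_H^{-1}}{\bft'\Gamma_H^{-1}\bft}.
\end{equation*}
The expectation is then immediate from $\EE[\bfB_t^H(\bfB_t^H)']=\Gamma_H$ and the trace trick:
\begin{equation*}
\EE[(\bfB_t^H)'Q\bfB_t^H]=\Tr(Q\Gamma_H)=\Tr(I)-\frac{\bft'\Gamma_H^{-1}\bft}{\bft'\Gamma_H^{-1}\bft}=N-1,
\end{equation*}
which yields $\EE[\hat\sigma^2]=\frac{N-1}{N}\sigma^2$.

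For the variance, I would write $\bfB_t^H=\Gamma_H^{1/2}\bfZ$ with $\bfZ\sim N(0,I_N)$ so that $\hat\sigma^2=\frac{\sigma^2}{N}\bfZ'A\bfZ$ with $A:=\Gamma_H^{1/2}Q\Gamma_H^{1/2}$. For a centered Gaussian vector and symmetric $A$ one has $\var(\bfZ'A\bfZ)=2\Tr(A^2)$, so everything reduces to computing $\Tr(A^2)=\Tr(Q\Gamma_H Q\Gamma_H)$. The key algebraic step is the identity
\begin{equation*}
Q\Gamma_H Q=Q,
\end{equation*}
which I would verify directly: writing $u=\Gamma_H^{-1}\bft$ and $c=\bft'\Gamma_H^{-1}\bft=\bft'u=u'\Gamma_H u$, one expands
\begin{equation*}
(\Gamma_H^{-1}-uu'/c)\,\Gamma_H\,(\Gamma_H^{-1}-uu'/c),
\end{equation*}
uses $u'\Gamma_H=\bft'$ and $\bft'u=c$ to collapse the cross terms, and recovers $Q$. (Conceptually, $Q\Gamma_H$ is the oblique projection onto the orthogonal complement of $\bft$ in the $\Gamma_H^{-1}$-inner product, hence idempotent.) Consequently $A^2=A$ and $\Tr(A^2)=\Tr(A)=\Tr(Q\Gamma_H)=N-1$, giving
\begin{equation*}
\var[\hat\sigma^2]=\frac{\sigma^4}{N^2}\cdot 2(N-1)=\frac{2(N-1)\sigma^4}{N^2}\xrightarrow{N\to\infty}0.
\end{equation*}

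The only delicate point is the idempotency identity $Q\Gamma_H Q=Q$; it is a short but careful algebraic manipulation, and without it one would have to bound $\Tr(A^2)$ via eigenvalues of $\Gamma_H$ (as was done for $\hat\mu$ in Theorem 2.1), which would be substantially messier. Once idempotency is in hand, both conclusions in \eref{e.2.2} follow at once.
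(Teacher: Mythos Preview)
Your proof is correct and reaches the same exact variance $\var[\hat\sigma^2]=\dfrac{2(N-1)}{N^2}\sigma^4$ as the paper, but by a genuinely different route. The paper does not introduce the matrix $Q$; instead it expands $E[(\hat\sigma^2)^2]$ into three terms and evaluates the mixed moments $E\big[((\bfB_t^H)'\Gamma_H^{-1}\bfB_t^H)^2\big]$ and $E\big[(\bfB_t^H)'\Gamma_H^{-1}\bfB_t^H\,(\bft'\Gamma_H^{-1}\bfB_t^H)^2\big]$ by computing the joint moment generating function $E\big[\exp(\lambda(\bfB_t^H)'\Gamma_H^{-1}\bfB_t^H+\varepsilon\,\bft'\Gamma_H^{-1}\bfB_t^H)\big]$ (after whitening $X=\Gamma_H^{-1/2}\bfB_t^H$) and reading off Taylor coefficients in $\lambda,\varepsilon$. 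Your argument replaces that computation by the structural observation that $A=\Gamma_H^{1/2}Q\Gamma_H^{1/2}$ is idempotent of trace $N-1$, so the standard identity $\var(\bfZ'A\bfZ)=2\Tr(A^2)$ finishes the job in one line. Your approach is shorter and more conceptual---indeed it shows $A$ is an orthogonal projection, so $N\hat\sigma^2/\sigma^2\sim\chi^2_{N-1}$, giving the full law and not just two moments. The paper's generating-function approach, on the other hand, is self-contained (it does not appeal to the formula $\var(\bfZ'A\bfZ)=2\Tr(A^2)$) and would adapt more readily to situations where no clean idempotency is available.
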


\begin{proof}
By replacing $\bfY$ with $\mu \bft+\si \bfB_t^H$  in \eref{e.1.4},
we have
\begin{equation*}
{\hat \sigma^{2}}=\frac{\sigma^{2}}{N} [(\bfB_t^H)' \Gamma^{-1}_{H}
\bfB_t^H - \frac{({\bf t}' \Gamma^{-1}_{H} \bfB_t^H)^{2} }{{\bf
t}^{t }\Gamma^{-1}_{H} {\bf t}} ]\,.
\end{equation*}
Thus
\begin{eqnarray}
E[{\hat \sigma^{2}}] &=&\frac{\sigma^{2}}{N} E [(\bfB_t^H)'
                     \Gamma^{-1}_{H} \bfB_t^H - \frac{({\bf t}' \Gamma^{-1}_{H}
                     \bfB_t^H)^{2} }{{\bf t}^{t }\Gamma^{-1}_{H} {\bf t}} ]\notag \\
                   &=&\frac{\sigma^{2}}{N} (N - \frac{{\bf t}' \Gamma^{-1}_{H}
                     E [\bfB_t^H (\bfB_t^H)']  \Gamma^{-1}_{H} t }{{\bf t}^{t }\Gamma^{-1}_{H} {\bf t}} )
                                         =\frac{N-1}{N}\sigma^{2}\,.\label{e.2.3}
\end{eqnarray}
To compute the variance of ${\hat \sigma^{2}}$  we also need to
compute
  $ E[({\hat \sigma^{2}})^{2}]$:
\begin{align}
 &E[({\hat \sigma^{2}})^{2}] \notag\\
=&\frac{\sigma^{4}}{N^{2}} E
\Big[ \big ((\bfB_t^H)' \Gamma^{-1}_{H} \bfB_t^H - \frac{({\bf t}'
\Gamma^{-1}_{H} \bfB_t^H)^{2} }{{\bf t}^{t
}\Gamma^{-1}_{H} {\bf t}} \big)^{2} \Big] \notag \\
                        =&\frac{\sigma^{4}}{N^{2}}  \Big (
                        E[ ((\bfB_t^H)'\Gamma^{-1}_{H} \bfB_t^H)^{2}]
                        -2 E [(\bfB_t^H)' \Gamma^{-1}_{H}
                        \bfB_t^H \frac{({\bf t}' \Gamma^{-1}_{H} \bfB_t^H)^{2} }
                        {{\bf t}^{t }\Gamma^{-1}_{H} {\bf t}}] \notag \\
&+E[(\frac{({\bf t}' \Gamma^{-1}_{H} \bfB_t^H)^{2} }
                        {{\bf t}^{t }\Gamma^{-1}_{H} {\bf t}})^{2}]
                        \Big)\notag \\
                         =&\frac{\sigma^{4}}{N^{2}}  \Big (
                        E[ ((\bfB_t^H)'\Gamma^{-1}_{H} \bfB_t^H)^{2}]
                        -2 E [(\bfB_t^H)' \Gamma^{-1}_{H}
                        \bfB_t^H \frac{({\bf t}' \Gamma^{-1}_{H} \bfB_t^H)^{2} }
                        {{\bf t}^{t }\Gamma^{-1}_{H} {\bf t}}]+3
                        \Big)\,. \label{e.2.4}
\end{align}
Denote $X=\Ga_H^{-1/2} \bfB_t^H$.  Then $\EE(XX')= E(
\Ga_H^{-1/2}\bfB_t^H(\bfB_t^H )' \Ga_H^{-1/2})=I$.   Therefore, $X$
is a standard Gaussian vector of dimension $N$. For any $\la$ small
enough and $\vare\in \RR$ let us compute the following .
\begin{eqnarray*}
E[\exp (\lambda (\bfB_t^H)'\Gamma^{-1}_{H} \bfB_t^H+ \varepsilon
{\bf t}'\Gamma^{-1}_{H} \bfB_t^H)]&=&E[\exp(\lambda {|\bf X|}^{2}
+\varepsilon {\bf
t}'\Gamma^{-\frac{1}{2}}_{H} {\bf X} )]       \\
                  &=&\frac{1}{(2\pi)^{\frac{N}{2}}}\int_{\RR^{N}}
       e^{-\frac{{|\bf X|}^{2}}{2}+ \lambda {\bf X}^{2} +\varepsilon {\bf
t}'\Gamma^{-\frac{1}{2}}_{H} {\bf X}  }d{\bf X}\,.
                \end{eqnarray*}
A standard technique of completing the squares yields
\begin{equation*}
E[\exp (\lambda (\bfB_t^H)'\Gamma^{-1}_{H} \bfB_t^H+ \varepsilon
{\bf t}'\Gamma^{-1}_{H} \bfB_t^H)]
  =(1-2\lambda)^{-\frac{N}{2}} \exp \left\{ \frac{\varepsilon^{2} {\bf
t}'\Gamma^{-1}_{H} {\bf t}}{2(1-2\lambda)}\right\}=:f(\la, \vare)
\,.
\end{equation*}
We are only interested in the coefficient of $\la^2$ and
$\la\vare^2$ in the above expression $f(\la, \vare)$. We have
\begin{eqnarray*}
f(\la, \vare) &=&
(1+N\la+N(N+2)\la^2+\cdots)\left[1+\frac{\varepsilon^{2} {\bf
t}'\Gamma^{-1}_{H} {\bf t}}{2 }(1+2\la+\cdots)+\cdots\right]\\
&=& 1+N\la+N(N+2)\la^2+\cdots+(N+2)\la\vare^2t'\Ga_H^{-1}t+\cdots\,.
\end{eqnarray*}
Comparing the coefficients of $\la^2$ and $\lambda\varepsilon^{2}$
we have
\begin{eqnarray}
\EE\left[(\bfB_t^H)'\Gamma^{-1}_{H} \bfB_t^H)^2\right]
&=& N(N+2)\,,\label{e.2.5}\\
E[(\bfB_t^H)'\Gamma^{-1}_{H} \bfB_t^H({\bf t}'\Gamma^{-1}_{H}
\bfB_t^H)^{2}]&=&(N+2)({\bf t}'\Gamma^{-1}_{H}{\bf t})\,. \notag
\end{eqnarray}
Hence, we have
\begin{equation}
E [(\bfB_t^H)' \Gamma^{-1}_{H}
                        \bfB_t^H \frac{({\bf t}' \Gamma^{-1}_{H} \bfB_t^H)^{2} }
                        {{\bf t}^{t }\Gamma^{-1}_{H} {\bf t}}]
                        =\frac{(N+2) ({\bf t}'\Gamma^{-1}_{H}{\bf t})}{{\bf t}^{t
                        }\Gamma^{-1}_{H} {\bf t}}=N+2 \,.\label{e.2.6}
\end{equation}
Using \eref{e.2.3}, \eref{e.2.4}, \eref{e.2.5} and \eref{e.2.6}, we
obtain
\begin{eqnarray}
\var[{\hat \sigma^{2}}] &=& E[({\hat \sigma^{2}})^{2}]- (E[{\hat
\sigma^{2}}])^{2}\nonumber\\
&=&\frac{\sigma^{4}}{N^{2}}[N(N+2)-2(N+2)+3-(N-1)^{2}]\nonumber\\
&=&\frac{2(N-1)}{N^{2}}\sigma^{4} \,,\label{e.2.7a}
\end{eqnarray}
which is convergent to $0$.   Thus we prove  the theorem.
\end{proof}

Now we can show the strong consistence of the MLE  $\hat{\mu}$ and
${\hat \sigma^{2}}$ as $N\rightarrow\infty$.

\begin{theorem} The estimators $\hat \mu$ and $\hat \si^2$ defined by
\eref{e.1.3} and \eref{e.1.4}, respectively,  are strongly
consistent, that is,
\begin{eqnarray}
&& \hat{\mu} \rightarrow \mu \quad                 a.s. \quad  as \quad N\rightarrow\infty \label{e.2.7}\\
&&{\hat \sigma^{2}} \rightarrow \sigma^{2} \quad   a.s. \quad  as
\quad N\rightarrow\infty \label{e.2.8}
\end{eqnarray}
\end{theorem}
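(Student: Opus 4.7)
The plan is to upgrade the $L^{2}$-convergence results of Theorems~2.1 and 2.2 to almost sure convergence via the Borel--Cantelli lemma. The direct Chebyshev bound fails: the variance of $\hat{\mu}$ is only $O(N^{2H-2})$ and the variance of $\hat{\sigma}^{2}$ is $O(N^{-1})$, neither of which is summable in general. The key observation that rescues us is that $\hat{\mu}$ lies in the first Wiener chaos and $\hat{\sigma}^{2}$ in the second, so much stronger tail/moment bounds are available.

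For $\hat{\mu}$ I would start from the representation \eref{e.2.1}, which shows that $\hat{\mu}-\mu$ is a linear functional of the Gaussian vector $\bfB_{t}^{H}$ and therefore itself centered Gaussian with variance $\sigma^{2}/(\bft'\Gamma_{H}^{-1}\bft)\le C N^{2H-2}$ (this last bound was already established inside the proof of Theorem~2.1). The Gaussian tail inequality then gives, for every $\varepsilon>0$,
\begin{equation*}
 P\bigl(|\hat{\mu}_{N}-\mu|>\varepsilon\bigr)\le 2\exp\!\left(-\frac{\varepsilon^{2}}{2C}\,N^{2-2H}\right).
\end{equation*}
Since $H<1$, the exponent $2-2H$ is strictly positive, so the right-hand side is summable in $N$; Borel--Cantelli yields \eref{e.2.7}.

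For $\hat{\sigma}^{2}$ the appropriate tool is hypercontractivity (equivalence of all $L^{p}$ norms) on the second Wiener chaos, combined with the moment computations already carried out in the proof of Theorem~2.2. Writing $\hat{\sigma}^{2}-E[\hat{\sigma}^{2}]$ as a centered quadratic form in the Gaussian vector $X=\Gamma_{H}^{-1/2}\bfB_{t}^{H}$ (equivalently, a sum of double Wiener--It\^o integrals), one has $E\bigl[(\hat{\sigma}^{2}-E\hat{\sigma}^{2})^{2k}\bigr]\le C_{k}\bigl(\var[\hat{\sigma}^{2}]\bigr)^{k}$ for every integer $k$. Taking $k=2$ and using \eref{e.2.7a} gives
\begin{equation*}
 P\bigl(|\hat{\sigma}^{2}-E[\hat{\sigma}^{2}]|>\varepsilon\bigr)\le\frac{E\bigl[(\hat{\sigma}^{2}-E\hat{\sigma}^{2})^{4}\bigr]}{\varepsilon^{4}}\le\frac{C}{\varepsilon^{4}N^{2}},
\end{equation*}
which is summable. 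A second Borel--Cantelli application yields $\hat{\sigma}^{2}-E[\hat{\sigma}^{2}]\to 0$ almost surely, and combining this with $E[\hat{\sigma}^{2}]=\frac{N-1}{N}\sigma^{2}\to\sigma^{2}$ from \eref{e.2.3} gives \eref{e.2.8}.

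The main obstacle, as noted, is that the variances are not summable, so one must exploit the chaos structure: Gaussianity for $\hat{\mu}$ (yielding exponential tails) and the equivalence of moments on the second chaos for $\hat{\sigma}^{2}$ (yielding an arbitrarily high polynomial decay rate for the tails). Once the right moment is invoked, the remainder of the argument reduces to an application of Borel--Cantelli.
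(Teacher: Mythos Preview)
Your proposal is correct and follows the same overall strategy as the paper: upgrade the $L^{2}$ bounds of Theorems~2.1 and 2.2 to almost sure convergence via Borel--Cantelli, exploiting the Wiener-chaos structure to get summable tail probabilities that the raw variances alone do not provide. For $\hat\sigma^{2}$ your argument is identical to the paper's (hypercontractivity on the second chaos, then Chebyshev at a high enough moment). For $\hat\mu$ there is a small but genuine difference: the paper treats both estimators uniformly, invoking Nelson's hypercontractivity to bound $E|\hat\mu-\mu|^{p}$ by $C_{p}(E|\hat\mu-\mu|^{2})^{p/2}$ and then choosing $p$ large, whereas you observe that $\hat\mu-\mu$ is \emph{exactly} Gaussian and use the Gaussian tail bound directly. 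Your route is slightly more elementary and gives exponential rather than polynomial decay; the paper's route has the virtue of being a single template that covers both estimators at once.
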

\begin{proof}   Let's prove the convergence for
$\hat{\mu}$ first. We will use a Borel-Cantelli lemma. To this end,
we will show that
\begin{equation}
\sum_{N\geq 1}P\Big (|\hat{\mu}-\mu|>\frac{1}{N^{\epsilon}}\Big )
<\infty \label{e.2.9}
\end{equation}
for some $\epsilon>0$.

Take $0<\epsilon<1-H$. Then from the Chebyshev's inequality and the
Nelson's hypercontractivity inequality \cite{hu1},  we have
\begin{eqnarray*}
P\Big (|\hat{\mu}-\mu|>\frac{1}{N^{\epsilon}}\Big )
&=& N^{2p\vare} E(|\hat \mu-\mu|^p)\le  C_p N^{2p\vare} \left(E(|\hat \mu-\mu|^2) \right)^{p/2}\\
&\le& C_p ' \si^p h^{-(2H+2)p} N^{2p\vare+(2H-2)p} \,.
\end{eqnarray*}
For sufficiently large $p$, we have $2p\vare+(2H-2)p<-1$. Thus
\eref{e.2.9} is  proved, which implies \eref{e.2.7} by
Borel-Cantelli lemma.

In the same way, we can show    \eref{e.2.8}.
\end{proof}

\setcounter{theorem}{0} \setcounter{equation}{0}
\vspace{6mm}\setcounter{section}{3} \normalsize {\bfseries 3.
Asymptotic.} Now we are interested in the central limiting type
theorem for the estimators $\hat\mu$ and $\hat \si^2$. First from
\eref{e.2.1}, it is easy to see that
 \[\sqrt{{\bf t}^{t }\Gamma^{-1}_{H} {\bf
t}}(\hat{\mu}-\mu)\xrightarrow{\mathcal {L}} \mathcal
{N}(0,\sigma^{2})\quad \hbox{ as $N$ tends to infinity}
\]
We want to study $\hat \si^2$
 \begin{theorem}  We have
 \begin{align}
\sqrt{\frac{N}{2}}\left({\hat \sigma^{2}}-\sigma^{2}\right)
\xrightarrow{\mathcal {L}} \mathcal {N}(0,\sigma^{4}) \quad
\hbox{as\ } \quad  N\rightarrow \infty \,. \label{e.3.1}
\end{align}
\end{theorem}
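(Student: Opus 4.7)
My plan is to recognize that $N\hat\sigma^2/\sigma^2$ is exactly a chi-square random variable with $N-1$ degrees of freedom, at which point the theorem becomes an instance of the classical central limit theorem for chi-squares. The key algebraic step is to diagonalize the quadratic form already isolated in the proof of Theorem 2.2. Setting $X=\Gamma_H^{-1/2}\bfB_t^H$, we saw that $X\sim \mathcal{N}(0,I_N)$, and substituting into \eref{e.1.4} gives
\begin{equation*}
\hat\sigma^2=\frac{\sigma^2}{N}\Big(|X|^2-(u'X)^2\Big),\qquad u=\frac{\Gamma_H^{-1/2}\bft}{\sqrt{\bft'\Gamma_H^{-1}\bft}},
\end{equation*}
where $u\in\RR^N$ is a unit vector. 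This is the starting point.

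Next I would complete $u$ to an orthonormal basis $\{u,v_2,\dots,v_N\}$ of $\RR^N$ and let $Y_1=u'X$, $Y_i=v_i'X$ for $i\ge 2$. Because $X$ is standard Gaussian and the transformation is orthogonal, $Y_1,\dots,Y_N$ are i.i.d.\ $\mathcal{N}(0,1)$, and $|X|^2-(u'X)^2=Y_2^2+\cdots+Y_N^2$. Hence
\begin{equation*}
\frac{N}{\sigma^2}\hat\sigma^2=Y_2^2+\cdots+Y_N^2\sim \chi^2_{N-1}.
\end{equation*}
This exact distributional identity also reproduces (and is a stronger form of) the moment computations \eref{e.2.3} and \eref{e.2.7a}, serving as a consistency check.

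With the chi-square representation in hand, I would invoke the classical CLT applied to the i.i.d.\ sum $\sum_{i=2}^N Y_i^2$ of mean-$1$, variance-$2$ random variables: $(Y_2^2+\cdots+Y_N^2-(N-1))/\sqrt{2(N-1)}\xrightarrow{\mathcal L}\mathcal N(0,1)$. Translating back,
\begin{equation*}
\frac{N\hat\sigma^2-(N-1)\sigma^2}{\sigma^2\sqrt{2(N-1)}}\xrightarrow{\mathcal L}\mathcal N(0,1).
\end{equation*}
A short application of Slutsky's theorem absorbs the $O(1)$ mismatch: the extra additive $\sigma^2/(\sigma^2\sqrt{2(N-1)})\to 0$, and $\sqrt{2(N-1)}/\sqrt{2N}\to 1$, so that $\sqrt{N/2}(\hat\sigma^2-\sigma^2)\xrightarrow{\mathcal L}\mathcal N(0,\sigma^4)$, which is \eref{e.3.1}.

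No step looks like a serious obstacle; the only point requiring care is the algebraic reduction to a unit-vector projection, since the somewhat baroque definition \eref{e.1.4} is what obscures the fact that $N\hat\sigma^2/\sigma^2$ is an honest $\chi^2_{N-1}$ variable. Once that is laid out, the CLT is classical and no Malliavin machinery is strictly needed for this particular statement; however, if one prefers, the same conclusion follows from the Nualart--Peccati fourth moment theorem applied to the second-chaos element $|X|^2-(u'X)^2-(N-1)$, whose fourth cumulant is easily checked to be $o(N^2)$.
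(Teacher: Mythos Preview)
Your argument is correct and considerably more elementary than the paper's. You exploit the exact distributional identity $N\hat\sigma^2/\sigma^2\sim\chi^2_{N-1}$, obtained by an orthogonal change of variables on the standard Gaussian vector $X=\Gamma_H^{-1/2}\bfB_t^H$, and then apply the ordinary CLT plus Slutsky. The paper instead writes $G_N=\sqrt{N/2}(\hat\sigma^2-\sigma^2)$ as a second-chaos element and verifies the Nualart--Ortiz-Latorre criterion by showing that $\|DG_N\|_{\cH}^2$ converges in $L^2(\Omega)$ to a constant; this requires computing three cross-terms $A_T^{(1)},A_T^{(2)},A_T^{(3)}$ via Malliavin derivatives and explicit covariance identities. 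Your route yields more: the exact finite-sample law of $\hat\sigma^2$ for every $N$ (and hence the moment formulas \eref{e.2.3}, \eref{e.2.7a} as corollaries), and it avoids all Malliavin machinery. The paper's approach, while heavier here, is meant to showcase a method that continues to work in settings---such as the fractional Ornstein--Uhlenbeck estimators of \cite{hu}---where no exact chi-square reduction is available.
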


\begin{proof}  To simplify notation we assume $H>1/2$.  The case $H<1/2$ is similar.
We define
\begin{equation*}
G_{N}=\sqrt{\frac{N}{2}}({\hat
\sigma^{2}}-\sigma^{2})=\frac{\sigma^{2}}{\sqrt{2N}}[(\bfB_t^{H})'
\Gamma^{-1}_{H} \bfB_t^{H} - \frac{({\bf t}' \Gamma^{-1}_{H}
\bfB_t^{H})^{2} }{{\bf t}^{t }\Gamma^{-1}_{H} {\bf
t}}]-\sqrt{\frac{N}{2}}\sigma^{2}\,.
\end{equation*}
From \eref{e.2.7a}, it is obvious that $E[G_{N}^{2}]$ converges to
$\si^4$. Thus from Theorem 4 of \cite{nualart} to show \eref{e.3.1},
it suffices to show that   $\parallel DG_{N}\parallel^{2}_{\mathcal
{H}} \xrightarrow{L_2(\Omega)} C $.

First, using the definition of Malliavin calculus, we obtain
\begin{equation*}
 D_{s}G_{N} =\sqrt{\frac{2}{N}} \si ^2 [D_{s}(\bfB_t^{H})' \Gamma^{-1}_{H} \bfB_t^{H} - \frac{ {\bf t}'
\Gamma^{-1}_{H}\bfB_t^{H} \cdot {\bf t}' \Gamma^{-1}_{H} D_{s}
\bfB_t^{H} }{{\bf t}^{t }\Gamma^{-1}_{H} {\bf t}}]\,,
\end{equation*}
where $D_{s}(\bfB_t^{H})'=(1_{[0,h]}(s),1_{[0,2h]}(s),\ldots,
1_{[0,Nh]}(s))$. Therefore, we have
\begin{align*}
&\parallel D_{s}G_{N} \parallel^{2}_{\mathcal {H}}\\
=&\frac{2\sigma^{4}}{N} \alpha_{H} \int'_{0}\int'_{0} |u-s|^{2H-2}
\left[D_{s}(\bfB_t^{H})' \Gamma^{-1}_{H} \bfB_t^{H}-\frac{ {\bf t}'
\Gamma^{-1}_{H}\bfB_t^{H} \cdot {\bf t}' \Gamma^{-1}_{H} D_{s}
\bfB_t^{H} }{{\bf t}^{t }\Gamma^{-1}_{H}
{\bf t}}\right] \\
&\left[D_{u}(\bfB_t^{H})' \Gamma^{-1}_{H} \bfB_t^{H}-\frac{ {\bf t}'
\Gamma^{-1}_{H}\bfB_t^{H} \cdot {\bf t}' \Gamma^{-1}_{H} D_{u}
\bfB_t^{H} }{{\bf t}^{t }\Gamma^{-1}_{H}
{\bf t}}\right] duds\\
=&\frac{2\sigma^{4}}{N} \cdot 4\alpha_{H}\int'_{0}\int'_{0}
|u-s|^{2H-2} [D_{s}(\bfB_t^{H})' \Gamma^{-1}_{H} \bfB_t^{H} \cdot
D_{u}(\bfB_t^{H})'
\Gamma^{-1}_{H} \bfB_t^{H}\\
&-\frac{ 2 D_{s} (\bfB_t^{H})' \Gamma^{-1}_{H} \bfB_t^{H} \cdot {\bf
t}' \Gamma^{-1}_{H}\bfB_t^{H} \cdot {\bf t}^{t }\Gamma^{-1}_{H}
D_{u}\bfB_t^{H}}{{\bf t}^{t }\Gamma^{-1}_{H}
{\bf t}}\\
&+\frac{({\bf t}' \Gamma^{-1}_{H}\bfB_t^{H})^{2} \cdot {\bf t}'
\Gamma^{-1}_{H}D_{s}\bfB_t^{H} \cdot {\bf t}'
\Gamma^{-1}_{H}D_{u}\bfB_t^{H} }{({\bf t}^{t }\Gamma^{-1}_{H}
{\bf t})^{2}}]duds \\
=&2 \sigma^{4}[A_{T}^{(1)}-2A_{T}^{(2)}+A_{T}^{(3)}].
\end{align*}
Since both $D_{s} (\bfB_t^{H})' \Gamma^{-1}_{H} \bfB_t^{H}$ and
$D_{u} (\bfB_t^{H})' \Gamma^{-1}_{H} \bfB_t^{H}$ are Gaussian random
variables we can write
\begin{align*}
 &E(|A_{T}^{(1)}-E A_{T}^{(1)}|^{2})\\
=&\frac{2}{N^{2}}\al_H^2 \int_{[0,T]^{4}} E[D_{s}(B_{\bf t}^{H})'
\Gamma^{-1}_{H}B_{\bf t}^{H}\cdot D_{r}
(B_{\bf t}^{H})'\Gamma^{-1}_{H}B_{\bf t}^{H}]\\
&\cdot E[D_{u}(B_{\bf t}^{H})' \Gamma^{-1}_{H}B_{\bf t}^{H}\cdot
D_{v} (B_{\bf t}^{H})'\Gamma^{-1}_{H}B_{\bf
t}^{H}]|s-u|^{2H-2}|r-v|^{2H-2} dsdrdudv \\
=&\frac{2}{N^{2}} \int_{[0,T]^{4}} [D_{s}(B_{\bf t}^{H})'
\Gamma^{-1}_{H}D_{r}B_{\bf t}^{H}\cdot D_{u}(B_{\bf t}^{H})'
\Gamma^{-1}_{H}D_{v}B_{\bf t}^{H}]\\
&\cdot |s-u|^{2H-2}|r-v|^{2H-2} dsdrdudv \,.
\end{align*}

Let $\Gamma^{-1}_{H}=(\Gamma^{-1}_{ij})_{i,j=1,\ldots,N}$ ,
$\Gamma_{H}=(\Gamma_{ij})_{i,j=1,\ldots,N}$ and $\delta_{lk} $  be
the Kronecker symbol. We shall use $\int_0^{ih} \int_0^{i'h}
|s-u|^{2H-2} dsdu =\Ga_{ii'}$ and $\sum_{j=1}^N \Ga_{ij}^{-1}
\Ga_{i'j} =\de_{ii'}$. Then  we have
\begin{align*}
&E(|A_{T}^{(1)}-E A_{T}^{(1)}|^{2}) \\
=&\frac{2}{N^{2} } \int_{[0,T]^{4}}
1_{[0,ih]}(s)\Gamma^{-1}_{ij}1_{[0,jh]}(r)\cdot
1_{[0,i'h]}(u)\Gamma^{-1}_{i'j'}1_{[0,j'h]}(v)\\
&\cdot\alpha_{H}|s-u|^{2H-2}\alpha_{H}|r-v|^{2H-2} dsdrdudv\\
=&\frac{2}{N^{2}\ }\sum^{N}_{i,j=1}\sum^{N}_{i',j'=1}
\Gamma^{-1}_{ij} \Gamma^{-1}_{i'j'}\cdot \Gamma_{ii'} \Gamma_{jj'}\\
=& \frac{2}{N^{2} }\sum_{i, j'=1}^N \de_{ij'}^2=\frac{2}{N}\,,
\end{align*}
 which converges to 0 as $N\rightarrow \infty$.

Now we deal with $A_T^{(2)}$.
\begin{align*}
 &E(|A_{T}^{(2)}-E A_{T}^{(2)}|^{2})\\
=&\frac{2\al_H^2}{N^{2}} \int_{[0,T]^{4}} E[D_{s} (\bfB_t^{H})'
\Gamma^{-1}_{H} \bfB_t^{H}\cdot {\bf t}'
\Gamma^{-1}_{H}\bfB_t^{H}\cdot \frac{{\bf t}^{t }\Gamma^{-1}_{H}
D_{u}\bfB_t^{H}}{{\bf t}^{t }\Gamma^{-1}_{H} {\bf t}}]\\
&\cdot E[D_{r} (\bfB_t^{H})' \Gamma^{-1}_{H} \bfB_t^{H}\cdot {\bf
t}' \Gamma^{-1}_{H}\bfB_t^{H}\cdot\frac{{\bf t}^{t }\Gamma^{-1}_{H}
D_{v}\bfB_t^{H}}{{\bf t}^{t }\Gamma^{-1}_{H} {\bf
t}}]\\
&\cdot |s-v|^{2H-2}|u-r|^{2H-2}dvdsdudr\\
=&\frac{2\alpha_H^2 }{N^{2}} \int_{[0,T]^{4}} \frac{D_{s}
(\bfB_t^{H})' \Gamma^{-1}_{H} {\bf t}\cdot{\bf t}^{t
}\Gamma^{-1}_{H} D_{u}\bfB_t^{H}}{{\bf t}^{t }\Gamma^{-1}_{H} {\bf t}}\\
&\cdot \frac{D_{r} (\bfB_t^{H})' \Gamma^{-1}_{H} {\bf t}\cdot{\bf
t}^{t }\Gamma^{-1}_{H} D_{v}\bfB_t^{H}}{{\bf t}^{t }\Gamma^{-1}_{H} {\bf t}} |s-v|^{2H-2}|u-r|^{2H-2}dvdsdudr \\
=&\frac{2}{N^{2} } \int_{[0,T]^{4}} \frac{
\sum^{N}_{i,j=1}\sum^{N}_{i',j'=1} {\bf
1}_{[0,ih]}(s)\Gamma^{-1}_{ij}jh\cdot i'h \Gamma^{-1}_{i'j'}{\bf
1}_{[0,j'h]}(u)}{{\bf t}^{t }\Gamma^{-1}_{H} {\bf
t}}\\
&\cdot\frac{\sum^{N}_{k,l=1}\sum^{N}_{k',l'=1} {\bf
1}_{[0,kh]}(r)\Gamma^{-1}_{kl}lh\cdot k'h \Gamma^{-1}_{k'l'}{\bf
1}_{[0,l'h]}(v) }{{\bf t}^{t }\Gamma^{-1}_{H} {\bf
t}}\\
&\cdot \alpha_{H} |s-v|^{2H-2} \alpha_{H} |u-r|^{2H-2}dvdsdudr \\
=& \frac{2}{N^2} \sum \Ga_{ij}^{-1} jh \Ga_{i'j'} i'h \Ga_{kl}^{-1}
lh \Ga_{k'l'}^{-1} k'h \Ga_{il'} \Ga_{j'k}\,,
\end{align*}
where the summation is over $1\le i, j, i', j', k, l, k', l'\le N$.
Sum first over $1\le i, j'\le N$ and then over $ 1\le l', k\le N$,
we have
\begin{eqnarray*}
E(|A_{T}^{(2)}-E A_{T}^{(2)}|^{2}) &=& \frac{2}{N^2} \frac{\sum_{j,
l, k', i'=1}^N jhlhk'hl'h \Ga_{k'j}^{-1}
\Ga_{i'l}^{-1}}{(t'\Ga_H^{-1}t)^2}=\frac{2}{N^2}\,.
\end{eqnarray*}
which converges to 0 as $N\rightarrow \infty$.

As for $A_T^{(3)}$,  we have
\begin{align*}
 &E(|A_{T}^{(3)}-E A_{T}^{(3)}|^{2})\\
=&\frac{2\al_H^2 }{N^{2}} \int_{[0,T]^{4}} \frac{{\bf t}'
\Gamma^{-1}_{H}D_{s}\bfB_t^{H} \cdot {\bf t}'
\Gamma^{-1}_{H}D_{s'}\bfB_t^{H} }{({\bf t}^{t }\Gamma^{-1}_{H} {\bf
t})^{2}}\cdot \frac{{\bf t}' \Gamma^{-1}_{H}D_{u}\bfB_t^{H} \cdot
{\bf t}' \Gamma^{-1}_{H}D_{u'}\bfB_t^{H}}{({\bf t}^{t
}\Gamma^{-1}_{H}
{\bf t})^{2}} \\
&\cdot \big ( E({\bf t}' \Gamma^{-1}_{H}\bfB_t^{H})^{2}\big)^{2}|s-u|^{2H-2}|s'-u'|^{2H-2}dvdsdudr \\
=&\frac{2 \al_H^2}{N^{2}} \int_{[0,T]^{4}} \frac{{\bf t}'
\Gamma^{-1}_{H}D_{s}\bfB_t^{H} \cdot {\bf t}'
\Gamma^{-1}_{H}D_{s'}\bfB_t^{H}}{({\bf t}^{t }\Gamma^{-1}_{H} {\bf
t})^{2}}\cdot \frac{{\bf t}' \Gamma^{-1}_{H}D_{u}\bfB_t^{H} \cdot
{\bf t}' \Gamma^{-1}_{H}D_{u'}\bfB_t^{H}}{({\bf t}^{t
}\Gamma^{-1}_{H}
{\bf t})^{2}} \\
&\cdot \big ( {\bf t}' \Gamma^{-1}_{H}{\bf t}\big)^{2}|s-u|^{2H-2}|s'-u'|^{2H-2}dvdsdudr \\
=&\frac{2\al_H^2 }{N^{2}} \Big [ \int_{[0,T]^{2}} \frac{{\bf t}'
\Gamma^{-1}_{H}D_{s}\bfB_t^{H} {\bf t}'
\Gamma^{-1}_{H}D_{u}\bfB_t^{H}}{{\bf t}^{t }\Gamma^{-1}_{H} {\bf t}}
|s-u|^{2H-2} dsdu
\Big]^{2}\\
=&\frac{2}{N^{2}} \Big [ \int_{[0,T]^{2}}
\frac{\sum^{N}_{i,j=1}\sum^{N}_{i',j'=1}ih \Gamma^{-1}_{ij} {\bf
1}_{[0,jh]}(s)\cdot i'h \Gamma^{-1}_{i'j'} {\bf 1}_{[0,j'h] }(u)}
{{\bf t}^{t }\Gamma^{-1}_{H} {\bf t}} \\
&\cdot \alpha_{H}|s-u|^{2H-2} dsdu \Big]^{2}\\
=&\frac{2}{N^{2}} \Big [ \frac{\sum^{N}_{i,j=1}\sum^{N}_{i',j'=1}ih
\Gamma^{-1}_{ij} i'h \Gamma^{-1}_{i'j'}}{{\bf t}^{t }\Gamma^{-1}_{H}
{\bf t}}\Gamma_{jj'} \Big]^{2}
 = \frac{2}{N^{2} }\,,
\end{align*}
which converges to 0 as $N\rightarrow \infty$.

By triangular inequality, we have that
\begin{align*}
& E(\parallel D G_{N} \parallel^{2}_{\mathcal {H}}-E\parallel D
G_{N}\parallel^{2}_{\mathcal {H}})^{2}\\
=&E(A_{T}^{(1)}+A_{T}^{(2)}+A_{T}^{(3)}-E(A_{T}^{(1)}+A_{T}^{(2)}+A_{T}^{(3)}))^{2}\\
\leq & 9\big[ E(A_{T}^{(1)}-E(A_{T}^{(1)}))^{2} +
E(A_{T}^{(2)}-E(A_{T}^{(2)}))^{2}+E(A_{T}^{(3)}-E(A_{T}^{(3)}))^{2}\big]
\rightarrow   0.
\end{align*}
 This completes the proof of the theorem.
 \end{proof}

\newpage
\vspace{6mm} \setcounter{section}{4} \normalsize {\bfseries 4.
Simulation.} This section contains  numerical simulations  of the
estimators obtained in this paper.   The fractional Brownian motions
are simulated by the  Paxson's method \cite{paxson}.

\begin{center}
\centerline{TABLE 1}
\begin{tabular}{cccccccccccccc}
 \multicolumn{9}{c}{\sl The means and standard
deviations of estimators ($\mu$=0.7880, $\sigma^{2}$=0.8116)}\\
\toprule%
& \multicolumn{2}{c}{$H$=0.25}& \multicolumn{2}{c}{$H$=0.45}
&\multicolumn{2}{c}{$H$=0.55}&\multicolumn{2}{c}{$H$=0.75} \\
\cmidrule(l{1em}r){2-3} \cmidrule(l{1em}r){4-5}
\cmidrule(l{1em}r){6-7} \cmidrule(l{1em}r){8-9}
&$\mu$ & $\sigma^{2}$ & $\mu$ & $\sigma^{2}$ & $\mu$ & $\sigma^{2}$ & $\mu$ & $\sigma^{2}$ \\
\hline
Mean       &0.7862& 0.8152 & 0.7884 & 0.8153 & 0.7911 & 0.8126   & 0.7678 & 0.7910 \\
Std.dev.   &0.0116& 0.0830 & 0.0112 & 0.0937 & 0.0514 & 0.0692   & 0.0974 & 0.0736 \\
\bottomrule
\end{tabular}
\end{center}

\vskip 0.5cm
\begin{center}
\centerline{TABLE 2}
\begin{tabular}{cccccccccccccc}
 \multicolumn{9}{c}{\sl The means and standard
deviations of estimators ($\mu$=1.5880, $\sigma^{2}$=1.8116)}\\
\toprule%
& \multicolumn{2}{c}{$H$=0.25}& \multicolumn{2}{c}{$H$=0.45}
&\multicolumn{2}{c}{$H$=0.55}&\multicolumn{2}{c}{$H$=0.75} \\
\cmidrule(l{1em}r){2-3} \cmidrule(l{1em}r){4-5}
\cmidrule(l{1em}r){6-7} \cmidrule(l{1em}r){8-9}
&$\mu$ & $\sigma^{2}$ & $\mu$ & $\sigma^{2}$ & $\mu$ & $\sigma^{2}$ & $\mu$ & $\sigma^{2}$ \\
\hline
Mean       &1.5863& 1.8694 & 1.5882 & 1.8719 &1.5961 & 1.8647  & 1.5925 & 1.7864 \\
Std.dev.   &0.0148& 0.1724 & 0.0456 & 0.1786 &0.0710 & 0.1567  & 0.1879 & 0.1644 \\
\bottomrule
\end{tabular}
\end{center}

\vskip 0.5cm
\begin{center}
\centerline{TABLE 3}
\begin{tabular}{cccccccccccccc}
 \multicolumn{9}{c}{\sl The means and standard
deviations of estimators ($\mu$=3.5880, $\sigma^{2}$=5.8116)}\\
\toprule%
& \multicolumn{2}{c}{$H$=0.25}& \multicolumn{2}{c}{$H$=0.45}
&\multicolumn{2}{c}{$H$=0.55}&\multicolumn{2}{c}{$H$=0.75} \\
\cmidrule(l{1em}r){2-3} \cmidrule(l{1em}r){4-5}
\cmidrule(l{1em}r){6-7} \cmidrule(l{1em}r){8-9}
&$\mu$ & $\sigma^{2}$ & $\mu$ & $\sigma^{2}$ & $\mu$ & $\sigma^{2}$ & $\mu$ & $\sigma^{2}$ \\
\hline
Mean       &3.5861& 5.8133 & 3.5810 & 5.8192 &3.5837 & 5.8229   & 3.5834 & 5.8346 \\
Std.dev.   &0.0314& 0.1648 & 0.0792 & 0.1737 &0.0905 & 0.1031   & 0.0526 & 0.1026 \\
\bottomrule
\end{tabular}
\end{center}

\vskip 0.2cm From these  numerical computations, we see the
estimators are excellent both for $H>1/2$ and $H<1/2$.

\section*{Acknowledgements}   We thank David Nualart for helpful discussions.

\scriptsize{
\begin{flushleft}
\textsc{ YAOZHONG HU  \hskip 17em WEILIN XIAO AND WEIGUO ZHANG}\\
\textsc{ Department of Mathematics \hskip 11em School of Business Administration}\\
\textsc{ University of Kansas, 405 Snow Hall \hskip 6.36em South China University of Technology}\\
\textsc{ Lawrence, Kansas \hskip 15.8em GUANGZHOU}\\
\textsc{ USA \hskip 22.8em CHINA}\\
~E-MAIL: hu@math.ku.edu \hskip 14.15em E-MAIL: xiao@math.ku.edu
\end{flushleft}

\end{document}